\documentclass[11pt,a4paper]{article}
\setlength{\textwidth}{170mm}
\usepackage{amsmath,amsfonts,enumerate,amssymb,graphicx}
\usepackage[dvips]{epsfig}
\usepackage{pgf,tikz}
\usepackage{graphicx,float,anysize}
\usepackage{array,fullpage,color,cite}
\usepackage{amsmath,amssymb,amsfonts,amsthm,float,color,enumerate}
\newtheorem{girth}{Theorem}
\newtheorem{proposition}{Proposition}

\newcommand*{\diam}{\mathrm{diam}}
\newcommand*{\radius}{\mathrm{rad}}

\author{Ratko Darda\thanks{University of Primorska, UP FAMNIT, Glagolja\v ska 8, 6000 Koper, Slovenia. E-mail: {\tt ratko.darda@student.upr.si}}}

\begin{document}

\title{There is no square-complementary graph of girth $6$} \maketitle

\begin{abstract}
A graph is {\it square-complementary} ({\it squco}, for short) if its square and complement are isomorphic.
We prove that there is no squco graph of girth $6$, thus
answersing a question asked by Milani\v c et al.~[Discrete Math., 2014, to appear],
and leaving $g = 5$ as the only possible value of $g$ for which the existence of
a squco graph of girth $g$ is unknown.
\end{abstract}

\section{Introduction}

Given two graphs $G$ and $H$, we say that $G$ is the {\it square} of $H$ (and denote this by $G = H^2$) if their vertex sets coincide and two distinct vertices $x$, $y$ are adjacent in $G$ if and only if $x$, $y$ are at distance at most two in $H$. Squares of graphs and their properties are well-studied in literature (see, e.g., Section 10.6 in the monograph~\cite{MR1686154}).
A  graph $G$ is said to be {\it square-complementary} ({\it squco} for short) if its square is isomorphic to its complement. That is, $G^2\cong \overline{G}$, or, equivalently, $G\cong \overline{G^2}$.
The question of characterizing squco graphs was posed by Seymour Schuster at a conference in 1980~\cite{Schuster}. Since then, squco graphs were studied in the context of graph equations in terms of operators such as the line graph and complement (see \cite{MR613401, MR1322267, MR1405844, MR725876, MR1018613, MR1379114}).
The entire set of solutions of some of these equations was found (see for example \cite{MR613401} and references quoted therein).
However, the set of solutions of the equation $G^2 \cong \overline{G}$ remains unknown, despite several attempts to describe it (see for example \cite{MR1322267, MR1405844,MilPedPelVer}). The problem of determining all squco graphs was also posed as Open Problem No.~36 in Prisner's book~\cite{MR1379114}.

Examples of squco graphs are $K_1$, $C_7$, and a cubic vertex-transitive bipartite squco graph on $12$ vertices, known as the Franklin graph (see Fig.~\ref{fig:FranklinsGraph}).
\begin{figure}[!ht]
  \begin{center}
\includegraphics[height=25mm]{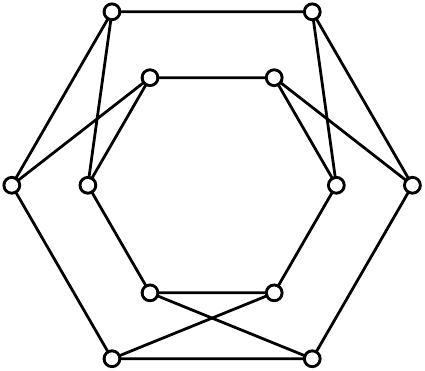}
  \end{center}
  \caption{The Franklin graph.}
\label{fig:FranklinsGraph}
\end{figure}

The following two propositions, due to Balti\'c et al.~\cite{MR1322267} (and partially due to Capobianco and Kim~\cite{MR1405844}),
summarize the results regarding the connectivity, radius, and diameter of squco graphs.

\begin{proposition}\label{prop:cut-vertex}
Every squco graph is connected and has no cut vertices.
\end{proposition}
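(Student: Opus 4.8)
\medskip
\noindent\emph{Proof strategy.}

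Connectedness is quick. Let $G$ be squco on $n$ vertices; for $n=1$ there is nothing to prove, so assume $n\ge 2$. Squaring a graph never amalgamates connected components, because two vertices in different components of $G$ are at infinite distance and hence stay non-adjacent in $G^2$; thus $G$ and $G^2$ have exactly the same components. Since the complement of any disconnected graph on at least two vertices is connected, a disconnected $G$ would make $G^2$ disconnected while $\overline G$ is connected, contradicting $G^2\cong\overline G$. So $G$ is connected, and hence so are $G^2$ and $\overline G$.

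For the second assertion I would argue by contradiction: suppose the squco graph $G$ has a cut vertex $v$, so by the above $G$ is connected and $n\ge 3$. First I record three easy consequences of the squco relation $G^2\cong\overline G$ (equivalently $G\cong\overline{G^2}$). (a) $\overline G$ is connected. (b) No vertex of $G$ has eccentricity $\le 2$: such a vertex would be universal in $G^2$, hence $\overline{G^2}\cong G$ would have an isolated vertex, contradicting connectedness; in particular $\ecc_G(v)\ge 3$ and $\radius(G)\ge 3$. (c) Using $\diam(G^2)=\lceil\diam(G)/2\rceil$ together with the standard complement--diameter facts (a graph of diameter $\ge 4$, or a disconnected graph, has complement of diameter $\le 2$; a graph of diameter $\le 1$ has disconnected or trivial complement), one gets $\diam(G)\in\{3,4\}$ and $\diam(\overline G)=\diam(G^2)=2$.

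Next I would analyse the cut. Let $A_1,\dots,A_k$ ($k\ge 2$) be the components of $G-v$. If two of them each contained a vertex at distance $\ge 3$ from $v$, those two vertices would be at distance $\ge 6$ in $G$ (every $A_i$--$A_j$ path passes through $v$), so $\diam(G)\ge 6$ and then $\diam(\overline G)=\lceil\diam(G)/2\rceil\ge 3$, contradicting (c). And some component must contain a vertex at distance $\ge 3$ from $v$, else $\ecc_G(v)\le 2$, contradicting (b). So exactly one, say $A_1$, does; pick $a^\ast\in A_1$ with $d_G(a^\ast,v)\ge 3$. For any $b$ in any $A_i$ with $i\ge 2$ we get $d_G(b,a^\ast)=d_G(b,v)+d_G(v,a^\ast)\ge 1+3$, and since $\diam(G)\le 4$ this equals $4$, forcing $d_G(b,v)=1$ and $d_G(v,a^\ast)=3$. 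So the situation is rigid: $\diam(G)=4$, $\ecc_G(v)=3$, the vertex $v$ is adjacent in $G$ to every vertex of $A_2\cup\cdots\cup A_k$, and the distances from $v$ inside $A_1$ are exactly $1$, $2$, $3$, each occurring (the last via $a^\ast$, the others by connectedness and by a shortest $v$--$a^\ast$ path).

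The remaining, and hardest, step is to rule out this rigid configuration; here diameter and radius information no longer suffices, since the configuration is consistent with it, so one must use the isomorphism $\phi\colon G^2\to\overline G$ itself. The plan is to confront the two local pictures. On the $G^2$ side, a vertex of $A_1$ at distance $\ge 3$ (resp.\ $\ge 2$) from $v$ has all its $G^2$-neighbours inside $A_1$ (resp.\ inside $A_1\cup\{v\}$); on the $\overline G$ side, every vertex of $A_2\cup\cdots\cup A_k$ is $\overline G$-adjacent to all of $A_1$ and to every other $A_i$, while the $\overline G$-neighbourhood of $v$ is exactly the set of vertices of $A_1$ at distance $\ge 2$ from $v$ in $G$. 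Following $v$, the set $A_1$, and a minimum-$G^2$-degree vertex of $A_1$ through $\phi$ and matching the induced adjacencies should force a numerical or structural clash (for instance an impossible degree match, or a forbidden induced subgraph of $\overline G$), thereby contradicting the existence of the cut vertex and completing the proof.
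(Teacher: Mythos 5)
First, note that the paper itself gives no proof of Proposition~\ref{prop:cut-vertex}: it is quoted from Balti\'c et al.\ and Capobianco--Kim, so there is no in-paper argument to measure yours against. Judged on its own, your write-up is correct and essentially complete up to its last paragraph. The connectedness argument is fine; facts (a)--(c) are correctly derived; and the reduction of a hypothetical cut vertex $v$ to the rigid configuration (exactly one component $A_1$ of $G-v$ containing vertices at distance $3$ from $v$, every vertex of $B:=A_2\cup\cdots\cup A_k$ adjacent to $v$, $\diam(G)=4$, $\ecc_G(v)=3$) is sound. The genuine gap is the final step: ``should force a numerical or structural clash'' is a plan, not an argument, and it is exactly the step that does the killing. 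As written, the proof does not rule out the configuration, so the proposition is not proved.

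The good news is that the gap closes cleanly with the tools you already set up, so the architecture is salvageable. Let $D_i$ be the vertices of $A_1$ at $G$-distance $i$ from $v$ (so $D_1,D_2,D_3$ partition $A_1$ and are all nonempty), put $T=D_2\cup D_3$ and $H=\overline{G^2}\cong G$. Every two vertices of $S:=B\cup\{v\}\cup D_1$ are at $G$-distance at most $2$ (they all lie in $N_G[v]$), so each $x\in S$ has $N_H(x)\subseteq T$, and $N_H(x)\neq\emptyset$ because $\ecc_G(x)\ge 3$ by your fact (b). On the other hand, every $t\in T$ is at $G$-distance $d_G(t,v)+1\in\{3,4\}$ from every $b\in B$ (all such paths pass through $v$), so every vertex of $T$ is $H$-adjacent to all of $B$. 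Now fix $b\in B$: it is $H$-adjacent to all of $T$, and every $x\in S\setminus\{b\}$ has an $H$-neighbour in $T$, whence $d_H(b,x)\le 2$. Thus $\ecc_H(b)\le 2$, contradicting fact (b) applied to the squco graph $H\cong G$. I recommend you replace the closing paragraph with this (or an equivalent) explicit contradiction; without it the proof is incomplete.
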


\begin{proposition}\label{prop:radius-diameter}
If $G$ is a nontrivial squco graph, then  $\radius(G) = 3$ and $3\le \diam(G)\le 4\,.$
Moreover, if $G$ is regular, then $\diam(G) = 3$.
\end{proposition}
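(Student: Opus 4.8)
The plan is to read everything off the identity $d_{G^2}(x,y)=\lceil d_G(x,y)/2\rceil$ (valid for connected $G$), which gives $\ecc_{G^2}(v)=\lceil\ecc_G(v)/2\rceil$ and hence $\radius(G^2)=\lceil R/2\rceil$, $\diam(G^2)=\lceil D/2\rceil$, where $R=\radius(G)$, $D=\diam(G)$; since $G^2\cong\overline G$ the same formulas hold with $\overline G$ in place of $G^2$. By Proposition~\ref{prop:cut-vertex}, $G$ is connected, and being nontrivial it has $n:=|V(G)|\ge 2$; note $G^2$, hence $\overline G$, is connected, so these radii and diameters are finite.

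\emph{The soft bounds.} If $D\le 2$ then $G^2=K_n$, so $\overline G\cong K_n$, forcing $\overline G=K_n$ and $G$ edgeless — impossible on a connected graph with $n\ge 2$; hence $D\ge 3$. If $R\le 2$ then some vertex $v$ has $\ecc_G(v)\le 2$, so $\ecc_{G^2}(v)=1$, i.e.\ $G^2$ has a universal vertex, so $\overline G$ does too, so $G$ has an isolated vertex — again impossible; hence $R\ge 3$. For the upper bound I would invoke the classical fact that $\radius(H)\ge 3$ forces $\diam(\overline H)\le 2$ (sketch: for an edge $xy$ of $H$ choose $u$ with $d_H(x,u)\ge 3$; then $u$ is non-adjacent to $x$, and also to $y$ — else $x\!-\!y\!-\!u$ gives $d_H(x,u)\le 2$ — so $u$ is a common non-neighbour of $x,y$ and $d_{\overline H}(x,y)\le 2$; non-edges of $H$ become edges of $\overline H$). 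Applied to $H=G$ (where $R\ge 3$), this gives $\diam(\overline G)\le 2$, i.e.\ $\lceil D/2\rceil\le 2$, so $D\le 4$. With $R\le D$ this leaves $(R,D)\in\{(3,3),(3,4),(4,4)\}$.

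\emph{The crux: $R\ne 4$, and $D\ne 4$ when $G$ is regular.} If $R=4$, every vertex has eccentricity $4$; if $G$ is $k$-regular with $D=4$, then $\overline G$ and hence $G^2$ is $(n-1-k)$-regular, so $|N_{\le 2}(v)|=n-k$ and $|N_3(v)|+|N_4(v)|=k$ for every $v$ (where $N_i(v)$ denotes the set of vertices at distance exactly $i$). In both cases one may fix a vertex $v$ with $N_4(v)\ne\emptyset$, and the sizes of the near spheres are pinned down by $|N_{\le 2}(v)|=1+d_{G^2}(v)$ and $d_{G^2}(v)=n-1-d_G(\phi(v))$ for an isomorphism $\phi\colon G^2\to\overline G$. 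I would then count the edges between $N_3(v)$ and $N_4(v)$ from both ends — every vertex of $N_4(v)$ has all its neighbours in $N_3(v)\cup N_4(v)$, whereas every vertex of $N_3(v)$ has a neighbour in $N_2(v)$ — and play this (together with the analogous count between $N_2(v)$ and $N_3(v)$, and with the requirement that $G$ and $\overline{G^2}$ have the same degree sequence) against the size constraints above to reach a contradiction; the same argument should force $\diam(G)=3$ when $G$ is regular.

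The step I expect to be the genuine obstacle is precisely this last one. Getting $R\ge 3$ and $3\le D\le 4$ uses only the arithmetic of squares and complements, but collapsing $R$ to exactly $3$ — equivalently, excluding the self-centred case $R=D=4$ — appears to need the isomorphism $G^2\cong\overline G$ in a more essential way than through radius, diameter and degree sequence alone, and it is the same local double count that should also eliminate $\diam(G)=4$ for regular $G$. Making that count, and the degree bookkeeping around it, airtight is where the real work would go.
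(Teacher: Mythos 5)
The paper does not actually prove this proposition---it quotes it from Balti\'c et al.\ and Capobianco--Kim---so I am judging your argument on its own terms. The first half is correct and complete: the identity $d_{G^2}(x,y)=\lceil d_G(x,y)/2\rceil$, the exclusion of $\radius(G)\le 2$ and $\diam(G)\le 2$ via universal/isolated vertices, and the chain $\radius(G)\ge 3\Rightarrow\diam(\overline G)\le 2\Rightarrow\diam(G)\le 4$ are all sound. But everything after ``the crux'' is a plan, not a proof, and you say so yourself. The claims $\radius(G)=3$ (rather than $\radius(G)\in\{3,4\}$) and $\diam(G)=3$ for regular $G$ are precisely the content the rest of the paper leans on (e.g.\ ``the set $N_3(v,G)$ is nonempty, because radius of $G$ is $3$''), and for these you only gesture at counting edges between $N_3(v,G)$ and $N_4(v,G)$ and ``playing this against the size constraints''; no contradiction is derived, and it is not clear your particular double count closes, since in the non-regular case the identity $|N_3(v)|+|N_4(v)|=\deg_G(\phi(v))$ ties the layer sizes to the degree of a \emph{different} vertex.

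The missing idea is a degree comparison in $\overline{G^2}$ rather than an edge count in $G$. If $z\in N_4(v,G)$, then $d_G(z,v)=4$ and $d_G(z,x)\ge 3$ for every $x\in N_1(v,G)$, so in $\overline{G^2}$ the vertex $z$ is adjacent to $v$ and to all of $N_1(v,G)$; hence $\Delta(\overline{G^2})\ge \deg_G(v)+1$ whenever some vertex lies at distance $4$ from $v$. Since $\overline{G^2}\cong G$, this forces $\ecc_G(v)\le 3$ for every vertex $v$ of maximum degree, giving $\radius(G)\le 3$ and hence $\radius(G)=3$; and when $G$ is regular every vertex has maximum degree, so $\diam(G)\le 3$ and hence $\diam(G)=3$. (In the regular case your own bookkeeping already suffices: you note both that $|N_3(v)|+|N_4(v)|=k$ and that a vertex of $N_4(v)$ has all $k$ of its neighbours inside $N_3(v)\cup N_4(v)$ minus itself, a set of size $k-1$---but you stop short of drawing even that one-line conclusion.) As written, the proposal establishes only $\radius(G)\in\{3,4\}$ and $3\le\diam(G)\le 4$.
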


It is not known whether a squco graph of diameter $4$ exists. In the paper~\cite{MilPedPelVer},
several other questions regarding squco graphs were posed, and a summary of the known necessary conditions for squco graphs was given.
Among them is the following result expressing a condition on the girth. (Recall that the {\it girth} of a graph $G$ is the length of a shortest cycle in $G$, or $\infty$ if $G$ is acyclic.)

\begin{proposition}\label{prop:girth-7}
If $G$ is a nontrivial squco graph with girth at least $7$, then $G$ is the $7$-cycle.
\end{proposition}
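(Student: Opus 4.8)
The plan is to show that $G$ must be $2$-regular; since $G$ is connected this makes $G$ a cycle $C_n$, and then $C_n^2$ is $4$-regular while $\overline{C_n}$ is $(n-3)$-regular, so $C_n^2\cong\overline{C_n}$ forces $n=7$ (and $n\ge 7$ anyway, since the girth of $C_n$ is $n$).

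First I would collect the consequences of girth $\ge 7$. By Proposition~\ref{prop:cut-vertex}, $G$ is connected with minimum degree $\delta\ge 2$, and by Proposition~\ref{prop:radius-diameter}, $\diam(G)\le 4$. Since the girth is at least $5$, for every vertex $v$ the neighbourhoods of the neighbours of $v$ overlap only in $v$, so $\deg_{G^2}(v)=\sum_{u\in N_G(v)}\deg_G(u)$ and the ball $B_2(v)$ of radius $2$ around $v$ satisfies $|B_2(v)|=1+\deg_{G^2}(v)$; and since the girth is at least $7$, the induced graph $G[B_2(v)]$ is a tree (it has no short cycle, and no longer cycle can fit inside a radius-$2$ ball). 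Writing $F_v=V(G)\setminus B_2(v)$ for the set of vertices at distance $\ge 3$ from $v$, this gives $\deg_{G^2}(v)=n-1-|F_v|$, so comparing degree multisets across the isomorphism $G^2\cong\overline G$ yields both $\{\,|F_v| : v\in V(G)\,\}=\{\,\deg_G(v) : v\in V(G)\,\}$ (in particular $\max_v|F_v|=\Delta(G)$) and the counting identity $\sum_v\deg_G(v)\bigl(\deg_G(v)+1\bigr)=n(n-1)$. The second structural input is that $\alpha(G^2)=\omega(\overline{G^2})=\omega(G)=2$, because $G$ is triangle-free and has an edge; equivalently, no three vertices of $G$ are pairwise at distance $\ge 3$, i.e.\ every $F_v$ is a clique of $G^2$ (a set of vertices pairwise at distance $\le 2$ in $G$).

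Next I would establish a localisation lemma: for every vertex $v$ there is a vertex $y$ with $F_v\subseteq N_G[y]$. For $|F_v|\le 2$ this is trivial; otherwise pick $x\in F_v$, note that $F_v\subseteq B_2(x)$ and that $T:=G[B_2(x)]$ is a tree, and use the girth bound to show that any two vertices of $F_v$ are still at distance $\le 2$ \emph{in $T$} (a shortcut through a vertex outside $B_2(x)$ would close a cycle of length at most $6$). The lemma then follows from the elementary fact that a set of vertices pairwise at distance $\le 2$ in a tree is contained in a single closed neighbourhood.

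The core of the proof is then a case analysis. Suppose $G$ is not $2$-regular; since $\delta\ge 2$ we have $\Delta:=\Delta(G)\ge 3$. Pick $u$ with $|F_u|=\Delta$ and let $y$ be as in the lemma, so $y$ is adjacent to every vertex of $F_u\setminus\{y\}$. If $y\notin F_u$ then $d(u,y)\le 2$ forces $F_u=L_3(u)$ to be an independent set in which every vertex has $y$ as its only neighbour at distance $2$ from $u$, so such a vertex has degree $1$ --- contradicting $\delta\ge2$. Hence $y\in F_u$, and by $\diam(G)\le4$ we have $d(u,y)\in\{3,4\}$; I would analyse each value using the distance-layers $L_1(u),L_2(u),L_3(u),L_4(u)$. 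In every subcase the girth-$7$ condition forces the vertices of $F_u$ together with their neighbourhoods into a rigid shape --- essentially a generalised theta graph with two ``hubs'' joined by several internally disjoint paths of length $3$ or $4$, all non-hub vertices having degree $2$ --- which gets pinned down exactly by combining $|F_y|\le\Delta$ with the lower bound on $n$ coming from counting the layers. One then derives a contradiction either from the identity $\sum_v\deg_G(v)(\deg_G(v)+1)=n(n-1)$, which for $\Delta\ge3$ has no admissible integer solution in this configuration, or from the observation that in the resulting rigid graph no vertex actually has degree $\Delta$, contradicting the choice of $u$. I expect this case analysis to be the main obstacle: one must rule out that $u$ or $y$ has eccentricity $4$ with $F_u$ lying deep in the graph, and carefully track how the distinguished vertex $y$ can be joined to $L_2(u),L_3(u),L_4(u)$ without creating a cycle of length less than $7$. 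The localisation lemma, the two global identities, and the final reduction to $C_7$ are comparatively routine.
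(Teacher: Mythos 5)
The paper does not prove Proposition~\ref{prop:girth-7} itself (it is quoted from~\cite{MilPedPelVer}), so your attempt can only be judged on its own terms --- and as written it is incomplete. Your preparatory material is all correct: with girth at least $7$ one does have $\deg_{G^2}(v)=\sum_{u\in N_G(v)}\deg_G(u)$, the ball $B_2(v)$ induces a tree, the degree multiset of $G$ equals $\{|F_v|\}$, the identity $\sum_v\deg_G(v)(\deg_G(v)+1)=n(n-1)$ holds, $\alpha(G^2)=\omega(G)=2$, and your localisation lemma (via the Helly property of trees) is sound. The genuine gap is the case $y\in F_u$ with $d(u,y)\in\{3,4\}$, which you yourself flag as ``the main obstacle'': there you assert that the girth condition forces a ``rigid shape --- essentially a generalised theta graph,'' that the configuration ``gets pinned down exactly,'' and that the counting identity then ``has no admissible integer solution,'' but none of this is actually derived. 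This is not a routine verification left to the reader; it is the entire content of the proposition (ruling out $\Delta\ge 3$), and without the explicit layer-by-layer analysis one cannot tell whether the claimed rigidity even holds, e.g.\ whether $L_4(u)$ can be nonempty or how $y$'s neighbours distribute over $L_2(u)\cup L_3(u)\cup L_4(u)$.

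Ironically, your observation that $\alpha(G^2)=2$ already finishes the proof in a few lines and makes the localisation lemma and the whole case analysis unnecessary. If some vertex $v$ has degree at least $3$, take neighbours $a,b,c$ of $v$; since $G$ is $2$-connected each has a neighbour $a',b',c'$ distinct from $v$, and these lie in $N_2(v,G)$. For any pair, say $a',b'$: they are distinct (else a $4$-cycle), non-adjacent (else a $5$-cycle), and have no common neighbour (a common neighbour $m$ cannot be $v$ and would close the $6$-cycle $v,a,a',m,b',b$), so they are pairwise at distance at least $3$. Hence $\{a',b',c'\}$ is an independent set of size $3$ in $G^2$, i.e.\ a triangle in $\overline{G^2}\cong G$, contradicting girth at least $7$. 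Thus $G$ is $2$-regular, hence a cycle, and your closing degree count ($4=n-3$) gives $C_7$. I would recommend replacing the unfinished case analysis with this direct argument.
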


This proposition leaves only $5$ possible values for the girth $g$ of a squco graph $G$, namely $g\in \{3,4,5,6,7\}$.
The case $g = 7$ is completely characterized by Proposition~\ref{prop:girth-7}.
Balti\'c et al.~\cite{MR1322267} and Capobianco and Kim~\cite{MR1405844} asked whether there exists a squco graph of girth $3$.
An affirmative answer to this question was provided in~\cite{MilPedPelVer} by a squco graph on $41$ vertices with a triangle (namely, the circulant $C_{41}(\{4, 5, 8, 10\})$). As shown by the Franklin graph, there also exists a squco graph of girth $4$.
The questions regarding the existence of squco graphs of girth $5$ or $6$ were left as open questions in~\cite{MilPedPelVer}.
In this note, we answer one of them, by proving that there is no squco graph of girth $6$.
This leaves $g = 5$ as the only possible value of $g$ for which the existence of
a squco graph of girth $g$ is unknown.

We briefly recall some useful definitions.
Given two vertices $u$ and $v$ in a connected graph $G$, we denote by
$d_G(u,v)$ the distance in $G$ between $u$ and $v$ (that is, the number of edges on a shortest $u$-$v$ path).
For a positive integer $i$, we denote by $N_i(v,G)$ the set of all vertices
$u$ in $G$ such that $d_G(u,v)= i$, and by $N_{\ge i}(v,G)$ the set of
all vertices $u$ in $G$ such that $d_G(u,v)\ge i$.  
We use standard graph terminology~\cite{MR2744811}.

\section{The result}

\begin{girth}
There is no squco graph of girth $6$.
\end{girth}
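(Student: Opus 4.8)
The plan is to assume for contradiction that $G$ is a squco graph of girth $6$ on $n$ vertices, fix an isomorphism $\phi\colon G^2\to\overline G$, and play off global identities forced by $\phi$ against the rigid local structure of a girth‑$6$ graph. The basic facts: by Propositions~\ref{prop:cut-vertex} and~\ref{prop:radius-diameter}, $G$ has no cut vertex (so, as $n\ge 6$, $\delta(G)\ge 2$), $\radius(G)=3$, and $\diam(G)\in\{3,4\}$. Since $G$ is triangle‑free and has an edge, $\alpha(\overline G)=\omega(G)=2$, hence $\alpha(G^2)=2$: \emph{no three vertices of $G$ are pairwise at distance at least $3$}. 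Since girth $6$ forbids triangles and $4$‑cycles, for every vertex $w$ each vertex of $N_2(w,G)$ has a unique neighbour in $N_1(w,G)$, which gives $\deg_{G^2}(w)=\sum_{u\in N_1(w,G)}\deg_G(u)$. Comparing degrees through $\phi$ then yields
\[\deg_{G^2}(w)=\sum_{u\sim w}\deg_G(u)=n-1-\deg_G(\phi(w))\qquad\text{for every }w,\]
and summing over $w$ gives $\sum_{v\in V(G)}\deg_G(v)\bigl(\deg_G(v)+1\bigr)=n(n-1)$.

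The regular case is immediate: if $G$ is $r$‑regular, the last identity forces $n=r^2+r+1$, whereas an $r$‑regular graph of girth $6$ has at least $2(r^2-r+1)$ vertices; this is possible only for $r\le 2$, and $r=2$ gives the $6$‑cycle, which has $6\ne 7$ vertices — a contradiction. So from now on $\Delta:=\Delta(G)\ge 3$.

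For the irregular case I analyse distance layers. Fix $v$ with $\ecc_G(v)=3$ (it exists as $\radius(G)=3$) and set $V_i=N_i(v,G)$, $0\le i\le 3$. Girth $6$ forces: $V_1,V_2$ are independent; each $u\in V_2$ has a unique parent $p(u)\in V_1$; every $w\in V_3$ has a neighbour in $V_2$; and a vertex of $V_3$ with two neighbours in $V_2$ sees two distinct parents. Consequently, in $G^2$ the set $\{v\}\cup V_1$ is a clique, each $\{a\}\cup B_a$ with $B_a:=p^{-1}(a)$ is a clique, and any further edge of $G^2$ inside $V_1\cup V_2$ joins $u\in B_a$, $u'\in B_{a'}$ ($a\ne a'$) having a common $G$‑neighbour, which must lie in $V_3$. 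Applying $\alpha(G^2)=2$ to any vertex $x$ with $\deg_G(x)\ge 3$: if $u\in B_a$ and $u'\in B_{a'}$ ($a\ne a'$) had no common neighbour in $N_3(x)$, then $u$, $u'$ and any vertex of a third bunch of $x$ would form an independent triple in $G^2$; hence every such $u,u'$ have a (by $C_4$‑freeness, unique) common neighbour in $N_3(x)$. Counting these common neighbours — a vertex $w\in N_3(x)$ is responsible for exactly $\binom{g_w}{2}$ pairs, where $g_w=|N(w)\cap N_2(x)|$, since its $V_2$‑neighbours lie in distinct bunches — gives
\[\binom{|N_2(x)|}{2}=\sum_{a\in N_1(x)}\binom{|B_a|}{2}+\sum_{w\in N_3(x)}\binom{g_w}{2}.\]
With $|B_a|\le\Delta-1$ and $g_w\le\Delta$ this yields $|N_2(x)|\le\Delta(\Delta-1)$, hence $\deg_{G^2}(x)=\deg_G(x)+|N_2(x)|\le\Delta^2$; taking $x$ of degree $\Delta$ and using $\deg_{G^2}(x)=n-1-\deg_G(\phi(x))\ge n-1-\Delta$ gives $n\le\Delta^2+\Delta+1$.

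The remaining step — turning all this into an outright contradiction — is the part I expect to be hardest. The plan is to run the bunch analysis simultaneously at a maximum‑degree and at a minimum‑degree vertex, substitute the resulting relations into the pointwise degree identity and into $\sum_v\deg_G(v)(\deg_G(v)+1)=n(n-1)$, and also exploit the $C_4$‑free bound on $|E(G)|$ together with the fact that at most two vertices can have degree exceeding about $n/3$ (three such would have neighbourhoods of total size more than $n$). The obstacles are that the diameter may be $4$, so for vertices of eccentricity $4$ one must carry along a fourth layer $V_4$ and the extra edges it creates, and that the genuinely resistant configuration is a very uneven degree sequence (one vertex of degree close to $n/2$, almost all others small); ruling this out calls for a careful case analysis of exactly how $V_3$ attaches to $V_2$ and to itself, and that is where the bulk of the argument lies.
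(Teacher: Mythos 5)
There is a genuine gap: your proposal is not a proof but a collection of preliminaries followed by an explicit admission that the decisive step is missing (``the remaining step --- turning all this into an outright contradiction --- is the part I expect to be hardest,'' followed by a plan and a list of obstacles). The identities you do establish are correct and nicely derived --- $\deg_{G^2}(w)=\sum_{u\sim w}\deg_G(u)=n-1-\deg_G(\phi(w))$, hence $\sum_v\deg_G(v)(\deg_G(v)+1)=n(n-1)$; $\alpha(G^2)=2$; and the Moore-bound disposal of the regular case --- but none of this contradicts anything in the irregular case, which is the entire content of the theorem. There is also a flawed step in what you do write out: from the assumption that $u\in B_a$ and $u'\in B_{a'}$ have no common neighbour in $N_3(x)$ you cannot conclude that $u$, $u'$ and \emph{any} vertex $u''$ of a third bunch form an independent triple in $G^2$, because $u''$ may perfectly well have a common neighbour in $N_3(x)$ with $u$ or with $u'$; the hypothesis $\alpha(G^2)=2$ only excludes triples that are \emph{pairwise} at distance at least $3$. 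Consequently your equality $\binom{|N_2(x)|}{2}=\sum_{a}\binom{|B_a|}{2}+\sum_{w}\binom{g_w}{2}$ is unjustified (only ``$\ge$'' follows from the injectivity forced by $C_4$-freeness), although the bound $|N_2(x)|\le\Delta(\Delta-1)$ you extract from it is trivially true anyway since there are at most $\Delta$ bunches each of size at most $\Delta-1$.

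For comparison, the paper closes the argument by a purely local analysis that your plan does not approach: it fixes a vertex $w$ of maximum degree $k$ and splits on whether $w$ has a neighbour of degree at least $3$. In the first case, the constraint $\Delta(\overline{G^2})\le k$ applied to well-chosen degree-$2$ vertices forces $|N_3(v,G)|=1$ for a suitable $v$, making $v$ a vertex of degree $1$ in $\overline{G^2}$ and contradicting $2$-connectivity. In the second case the same constraint forces $|N_3(w,G)|\le 3$ and then, subcase by subcase, either bounds $|V(G)|\le 11$ --- at which point the paper invokes the known classification of all squco graphs on at most $11$ vertices, none of which has girth $6$ --- or produces a parity/counting contradiction on the number of degree-$2$ vertices. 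Your sketch has no analogue of either the forcing step (``vertex of small degree would get degree $>k$ in $\overline{G^2}$'') or the final appeal to the small-order classification, and the global degree-sum identities you propose to combine are too weak on their own to exclude uneven degree sequences; that is precisely where your argument stops.
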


\begin{proof}
Suppose for a contradiction that $G$ is a squco graph of girth $6$.
First, we observe that if $x$ is a vertex of $G$, then there are no edges in any of sets $N_i(x,G)$ for $i=1,2$ and no two distinct vertices in $N_1(x,G)$ have a common neighbor in $N_2(x,G)$.
Let $k = \Delta(G)$ be the maximum degree of $G$, and let $w$ be a vertex of degree $k$.
Since the only squco graphs with maximum degree at most $2$ are $K_1$ and $C_7$~\cite{MilPedPelVer}, we have $k\ge 3$.

We consider two cases.

{\it Case 1.}  $w$ has a neighbor of degree at least three.

Let $v$ be a neighbor of $w$ of degree at least three, and let $p$ and $q$ be two neighbors of $v$ other than $w$. If one of them, say $p$, is of degree at least $3$, then $p$ has at least two
 neighbors in $N_2(v,G)$ and thus  $\Delta(\overline{G^2})\ge |N_1(q,\overline{G^2})|\geq k+1$, contrary to the fact that $\overline{G^2}\cong G$.
 Hence, both $p$ and $q$ are of degree~$2$. (Notice that Proposition~\ref{prop:cut-vertex} excludes the possibility of having degree~$1$ vertices).
 Let $a$ and $b$ be the unique neighbors of $p$ and $q$ in $N_2(v,G)$, respectively.
The set $N_3(v,G)$ is nonempty, because radius of $G$ is $3$ by Proposition~\ref{prop:radius-diameter}.
Vertices $a$ and $b$ must be adjacent to all of vertices in $N_3(v,G)$, otherwise
 $\Delta(\overline{G^2})\ge
 \max\{|N_1(p,\overline{G^2})|,|N_1(q,\overline{G^2})|\}\geq k+1$, contrary to the fact that $\overline{G^2}\cong G$.
To avoid a $4$-cycle in $G$, we conclude that  $|N_3(v,G)| = 1$.
But now, the degree of $v$ in $\overline{G^2}$ is $1$, which implies that $\overline{G^2}$  has a cut vertex, contrary to the fact that
$G$ is squco and Proposition~\ref{prop:cut-vertex}.

{\it Case 2.} All neighbors of $w$ are of degree at most two.

In this case, all neighbors of $w$ are of degree exactly two. In particular, $|N_2(w,G)| = |N_1(w,G)|= k\ge 3$.
Now we will show that every vertex $x$ from $N_2(w,G)$ is of degree at least $|N_3(w,G)|$.
Let $x\in N_2(w,G)$, and let $y$ be the unique neighbor of $x$ in $N_1(w,G)$.
Vertex $x$ has at least $|N_3(w,G)|-1$ neighbors in $N_3(w,G)$, since otherwise $|N_1(y,\overline{G^2})|\geq k+1$.
This implies that any two vertices from $N_2(w,G)$ (the size of $N_2(w,G)$ is at least $3$) have at least $|N_3(w,G)|-2$
common neighbors in $N_3(w,G)$. This bounds $|N_3(w,G)|\leq 3$, otherwise we would have a $4$-cycle.

Suppose $|N_3(w,G)|=3$. To each of the three pairs of vertices in $N_3(w,G)$, associate, if possible,
their common neighbor in $N_2(w,G)$. Because, each vertex in $N_2(w,G)$ is connected to at least two vertices in $N_3(w,G)$, it is surely associated with some pair.  If $|N_2(w,G)|\geq 4$ then some two vertices from $N_2(w,G)$ are associated with the same pair and we get a $4$-cycle, a contradiction.
We thus have $|N_1(w,G)|=|N_2(w,G)|= k\leq 3$ and $|N_{\geq 4}(w,G)|=0$
(otherwise we would have a vertex of degree at least $4>k$ in $\overline{G^2}$).
This implies that our graph has at most ten vertices. All squco graphs with at most $11$ vertices
are known~\cite{MilPedPelVer}; none of them has girth $6$. Hence this is a contradiction with $G$ having girth $6$.

Suppose $|N_3(w,G)|= 2$. If $k\leq 4$, then we our graph has no more than $11$ vertices, which is not possible.
Hence $k\geq 5$. There must be at least $2k-1$ vertices of degree two in $G$ (all $k$ vertices in $N_1(w,G)$; at most one of $k$ vertices in $N_2(w,G)$ has both vertices from $N_3(w,G)$ for neighbors, otherwise we have a $4$-cycle as before). In $\overline{G^2}$ at most $k+3$ of them are of degree two, because every vertex in $N_1(w,G)$ will be connected to all but one vertex in $N_2(w,G)$ in $\overline{G^2}$,which is a contradiction, because $k\geq 5$.

The last possibility is that $|N_3(w,G)|=1$, but then $w$ would be of degree $1$ in $\overline{G^2}$, again a contradiction. This completes the proof.
\end{proof}

\bibliography{squco-bib}{}
\bibliographystyle{plain}

\end{document}